\theoremstyle{definition}
\newtheorem*{theorem*}{Theorem}
\newtheorem{thm}{Theorem}
\newtheorem{cor}[thm]{Corollary}
\newtheorem{lem}[thm]{Lemma}
\newtheorem{theorem}{Theorem}
\newtheorem{corollary}[theorem]{Corollary}
\newtheorem{dfn}[thm]{\scshape{Definition}}
\newcommand\M{{\mathcal{M}}}
\newcommand\T{{\mathcal{T}}}
\newcommand\N{{\mathbb{N}}}
\newcommand\LL{{\mathcal L}}
\DeclareMathOperator{\Sym}{Sym}
\DeclareMathOperator{\Aut}{Aut}
\title{Tree languages and branched groups}
\author[L. Bartholdi]{Laurent Bartholdi}
\address{Laurent Bartholdi: Fakultät für Mathematik und Informatik, Universität des Saarlandes, 66123 Saarbrücken, Germany}
\email{laurentbartholdi@gmail.com}
\author[M. Noce]{Marialaura Noce}
\address{Marialaura Noce: Mathematisches Institut, Georg-August Universit\"{a}t zu G\"{o}ttingen, 37073 G\"{o}ttingen, Germany}
\email{mnoce@unisa.it}
\date{\today}
\keywords{Groups acting on trees, branched groups, tree languages, regular tree languages}
\subjclass[2020]{20E08, 68Q45, 20F10}
\begin{document}

\maketitle

\begin{abstract}
We study the portraits of isometries of rooted trees --- the labelling of the tree, at each vertex, by the permutation of its descendants --- in terms of languages. We characterize regularly branched self-similar groups in terms of $\omega$-regular languages. We deduce the algorithmic decidability of some problems, such as the comparison of regularly branched contracting groups, and their orbit structure on the boundary of the rooted tree.
\end{abstract}

\section{Introduction}
There is a rich interplay between group theory and theoretical computer science, and more precisely the theory of formal languages. We develop it in the context of \emph{self-similar groups} acting on rooted trees.

\subsection{Regular languages}

Let $A$ be a finite set called  \emph{alphabet}. A \emph{language} is a subset of $A^*$, where $A^*$ denotes the set of all finite words in $A$. Some languages can be described by a \emph{finite state automaton}: a device $\M$ with a finite amount of memory that reads letter-by-letter words over the alphabet $A$ as input and decides at the end whether to accept the word. 
An automaton $\M$ \emph{accepts} the word $w$ if there exists a run  $q_0, \dots, q_n$ on $w$ such that $q_n\in Q_f$. The language $\LL(\M)$ is the set of words accepted by the automaton $\M$. A language $K$ is \emph{regular} if there exists a finite state automaton $\M$ such that $K = \LL(\M)$. For more information we refer to~\cite{automatabook}.

Consider a group $G$ generated (as a monoid) by a set $A$. The set of words in $A^*$ that represent the identity in $G$ is a language $W(G)$, called the \emph{word problem} of $G$, and determines $G$. It is a still active area of research to relate the complexity of $W(G)$, as a language, with structural properties of $G$. The first result in this direction is due to Anisimov, who in 1970 proved that $W(G)$ is a regular language  if and only if the group $G$ is finite~\cite{MR301981}.

The group $G=\langle A\rangle$ may also be understood via a \emph{normal form} for its elements: a language $N(G)\subseteq A^*$ that is in bijection with $G$ under the natural evaluation map; for example, the free Abelian group $\mathbb Z^2$ can be represented by the regular language $(\{x\}^*\cup\{x^{-1}\}^*)(\{y\}^*\cup \{y^{-1}\}^*)$.

Automata may also be used to represent relations: adding an extra padding symbol $\diamond$ to $A$, one may represent a relation $R\subseteq A^*\times A^*$ by its language
\[
\{(u\diamond^{\max(0,|v|-|u|)},v\diamond^{\max(0,|u|-|v|)}:(u,v)\in R\},
\]
and the relation $R$ is called \emph{regular} if its language is regular as a subset of $((A\sqcup\{\diamond\})\times(A\sqcup\{\diamond\}))^*$. The rich class of \emph{automatic groups} are those groups $G=\langle A\rangle$ admitting a regular normal form $N\subseteq A^*$ and for which the multiplication relations $\{(u,v)\in N^2:u a=_G v\}$ are regular for all $a\in A$; see~\cite{cannonepstein}.

Kharlampovich, Khoussainov, and Miasnikov introduced Cayley-automatic groups in~\cite{cayleyautomatic} as groups admitting a regular normal form (not necessarily coming from words over a generating set) for their elements and the group operation. They prove that this class of groups strictly extends that of automatic groups, while retaining some of the most important algorithmic properties (such as fast solvability of the word problem). In this paper, the groups we are interested in shall not admit regular normal forms in the above sense; but rather as ``tree languages'', see below.

The theory of regular languages has been extended to infinite words by Büchi, see~\cite{MR125010}. In contrast to classical finite automata, a Büchi machine is a finite state automaton that takes infinite words as input and and accepts a run if the set of recurring states intersects the set of accepting states. An \emph{$\omega$-regular} language is a language recognized by a Büchi machine. The class of $\omega$-regular languages is closed under all boolean operations; this was used to prove decidability of Pressburger arithmetic~\cite{pressburgeraarithmetic}.

The theory was further extended to \emph{tree languages}. Finite words are seen as $A$-labels on the vertices of a finite path, and $\omega$-words are $A$-labelled infinite paths. In this vein, \emph{tree words} are $A$-labelled rooted trees (infinite in our case), and a \emph{tree automaton} is a finite machine accepting certain tree words. It reads trees starting from the root, one node at a time, and clones itself to process independently and in parallel all descendant subtrees. These tools are at the heart of Rabin's proof~\cite{rabin} that the monadic second-order logic of two successor functions (S2S) is decidable.

\subsection{Self-similar groups}
Finite state automata also appear in group theory as representations of the actual \emph{elements} of a group. For an alphabet $X$, the set of words $X^*$ naturally forms the vertex set of a regular rooted tree $\T$, rooted at the empty word and with an edge between $u$ and $u x$ for all $u\in X^*,x\in X$. Consider a group $G$ acting by tree isometries on $\T$, and $g\in G$. Then the action of $g$ on $\T$ is entirely characterized by its action $\sigma_g$ on $X$, the neighbours of the root, and for each $x\in X$ the induced bijection between the subtrees $x X^*$ and $\sigma_g(x)X^*$. Identifying these subtrees with $\T$, we obtain a permutation $\sigma_g$ and a collection $(g@x)_{x\in X}$ of tree isometries called the \emph{states} of $g$. If $g@x\in G$ for all $x\in X$, we call the group $G$ \emph{self-similar}. In this case, the action of $G$ on $\T$ is conveniently recorded by a homomorphism $\psi\colon G\to G\wr X$, defined by $g\mapsto((g@x),\sigma_g)$, where $G\wr X$ stands for the permutational wreath product $G^X\rtimes\Sym(X)$.

Furthermore, the operations $g\mapsto g@x$ may be composed; for $v=u x\in X^*$ we write $g@v=(g@u)@x$. If $g$ is such that $\{g@v:v\in X^*\}$ is finite, then $g$ may be represented by a finite automaton with alphabet $X\times X$: its stateset is $Q=\{g@v:v\in X^*\}$ and it has for all $h\in Q,x\in X$ a transition from $h$ to $h@x$ with label $(x,\sigma_h(x))$. 
As an example, consider $X=\{0,1\}$, the stateset $Q=\{a, b, c, d, e\}$, and the transitions
\begin{xalignat*}{2}
(a,(0,1))&\mapsto e, &(a,(1,0))&\mapsto e,\\
(b,(0,0))&\mapsto a, &(b,(1,1))&\mapsto c,\\
(c,(0,0))&\mapsto a, &(c,(1,1))&\mapsto d,\\
(d,(0,0))&\mapsto e, &(d,(1,1))&\mapsto b,\\
(e,(0,0))&\mapsto e, &(e,(1,1))&\mapsto e.
\end{xalignat*}
Every choice of initial state $q\in Q$ yields a relation $R_q\subset X^*\times X^*$ which is the graph of a permutation of $X^*$; the group generated by these permutations is known as the (first) Grigorchuk group~\cite{MR565099} and possesses a wealth of striking properties: it is a finitely generated torsion group which is infinite but all its proper quotients are finite; it is a group of intermediate word growth; it is amenable but not elementarily amenable, \dots. The Grigorchuk group and the Gupta-Sidki $p$-groups are instances of \emph{GGS-groups}, and provide a large family of counterexamples to the General Burnside Problem. 

Important subclasses of groups acting on trees have emerged: \emph{recurrent groups} are those for which the map `$g\mapsto g@v$' is surjective from the stabilizer of $v\in\T$ onto $G$. \emph{Level-transitive groups} are those that act transitively on $X^n$ for all $n\in\N$. Loosely speaking, a branched groups is a group whose  lattice of subnormal subgroups is similar to the corresponding structure in the full isometry group $\Aut\T$ of the tree $\T$. For a self-similar group $G$, there is a stronger notion: $G$ is \emph{regularly branched} if there is a finite-index subgroup $K$ such that $G$ contains, for all $v\in\T$, a copy of $K$ acting only on the subtree $v X^*$. The aforementioned examples of the Grigorchuk and Gupta-Sikdi torsion groups are recurrent, level-transitive and regularly branched.

Every $f \in \Aut\T$ can be described by providing, at every vertex $v$ of the tree, a permutation $\sigma_{f@v}\in\Sym(X)$ that describes how $f$ acts on the children of $v$. The \emph{portrait} of $f$ is this $\Sym(X)$-labelling $v\mapsto\sigma_{f@v}$ of $\T$, and there is a one-to-one correspondence between isometries of $\T$ and portraits.

The relations between a self-similar group $G$ and the collection of portraits of its elements have already been considered in the literature, in particular by Siegenthaler in~\cite{MR2429355} and his 2008 doctoral thesis; see also~\cites{MR2923476,MR2823790,MR3436774}. In that last article, Penland and \v{S}uni\'{c} prove that the closures of certain self-similar groups of rooted trees that satisfy an algebraic law do not have a regular language of portraits.

\subsection{Main results and their consequences}
This paper's main results relate tree languages with group-theoretical properties of branched groups.

\begin{theorem}[= Theorem~\ref{thm1}]\label{thmA}
Let $G$ be a level-transitive, recurrent, closed self-similar subgroup of $\Aut\T$. Then $G$ is regularly branched if and only if its set of portraits is a tree regular language.
\end{theorem}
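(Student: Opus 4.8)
\emph{Setup.}
The plan is to phrase everything in terms of tree automata over the alphabet $\Sym(X)$ and of the tree language $L_G:=\{\text{portrait of }g:g\in G\}$. The key preliminary remark is that $L_G$ is topologically \emph{closed}: writing $G_n\le\Aut(X^{\le n})$ for the image of $G$ acting on the top $n$ levels, closedness of $G$ says exactly that $g\in G$ iff the truncation of $g$ to $\Aut(X^{\le n})$ lies in $G_n$ for all $n$, hence $p\in L_G$ iff the depth-$n$ portrait $p|_{X^{<n}}$, read as an element of $\Aut(X^{\le n})$, lies in $G_n$ for all $n$. Thus $L_G$ is a ``safety'' tree language, and I only need top-down tree automata with the trivial acceptance condition, for which recognising a tree simply means that some run exists on it.

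\emph{Regularly branched $\Rightarrow$ regular.}
Let $K\le G$ have finite index with $K\times\cdots\times K\le\psi(K)$ (write $K^X$ for this direct power). I take the automaton whose stateset is the finite coset set $G/K$, with every state both initial and accepting, and with a transition $(s,\sigma,(s_x)_{x\in X})$ allowed exactly when some $g\in s$ has $\sigma_g=\sigma$ and $g@x\in s_x$ for all $x$. Any portrait of $g\in G$ is accepted by the run $v\mapsto(g@v)K$. For the converse inclusion, given a run $\rho$ on a portrait $p$, one shows by induction on $n$ that at every vertex $v$ some $g\in\rho(v)$ has depth-$n$ portrait (rooted at $v$) equal to $p|_{vX^{<n}}$; the inductive step is where branching enters: take a witness $g'$ of the transition at $v$ and, for each $x$, a $g_x\in\rho(vx)$ realising $p$ to depth $n$ on the subtree at $vx$; writing $g_x=(g'@x)k_x$ with $k_x\in K$, the element $g:=g'\cdot\psi^{-1}((k_x)_x,1)$ lies in $\rho(v)$ since $\psi^{-1}((k_x)_x,1)\in\psi^{-1}(K^X)\subseteq K$, and satisfies $\sigma_g=\sigma_{g'}$ and $g@x=g_x$. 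Taking $v=\varnothing$ and letting $n\to\infty$, closedness of $G$ gives $p\in L_G$. (This half uses only that $G$ is closed, self-similar and regularly branched.)

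\emph{Regular $\Rightarrow$ regularly branched.}
Here I would compute the residuals of $L_G$, using that a regular tree language has only finitely many residuals (with respect to contexts). Fix $v\in X^*$ and put $H_v:=\mathrm{Rist}_G(v)@v\le G$; note $g\mapsto g@v$ is injective on elements supported on $vX^*$. For a context at $v$ that fixes $v$, say realised by $g_0\in\mathrm{St}_G(v)$, one computes that the set of completions lying in $L_G$ is precisely the portrait set of the right coset $H_v\cdot(g_0@v)$, and that $g_0@v$ ranges over all of $G$ as $g_0$ ranges over $\mathrm{St}_G(v)$, by recurrence. Hence the residuals of $L_G$ already include the portrait languages of $H_v\xi$ for all $v\in X^*$, $\xi\in G$, of which there are $\sum_H[G:H]$ distinct ones, the sum over the distinct values of $H_v$. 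Finiteness of the residual set therefore forces that there are finitely many distinct subgroups $H_v$ and that each has finite index in $G$. Set $K:=\bigcap_{v\ne\varnothing}H_v$: a finite intersection of finite-index subgroups, so $[G:K]<\infty$. For each $x\in X$, since $\mathrm{Rist}_K(x)=\bigcap_v\mathrm{Rist}_{H_v}(x)$ and $g\mapsto g@x$ is injective on elements supported on $xX^*$ (so commutes with this intersection), one gets $\mathrm{Rist}_K(x)@x=\bigcap_v(\mathrm{Rist}_{H_v}(x)@x)=\bigcap_v H_{vx}\supseteq\bigcap_{w\ne\varnothing}H_w=K$, using the easy identity $\mathrm{Rist}_{H_v}(x)@x=H_{vx}$. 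Assembling elements $\prod_x\tilde k_x$ with $\tilde k_x\in\mathrm{Rist}_K(x)$, $\tilde k_x@x=k_x$, as in the first half, yields $K^X\le\psi(K)$; so $G$ is regularly branched over $K$ (replace $K$ by $K\cap\mathrm{St}_G(1)$ if one insists on $K\le\mathrm{St}_G(1)$).

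\emph{Where the difficulty lies.}
The first implication is essentially bookkeeping once the stateset $G/K$ is found. The crux is the residual computation in the converse: correctly enumerating \emph{all} residuals of $L_G$ — in particular separating contexts that fix $v$ from those that displace it (which give singleton residuals) and pinning down exactly which cosets $H_v\xi$ arise, which is precisely where recurrence is used — and verifying that this forces $\sum_H[G:H]<\infty$. I expect that, together with keeping track of how each hypothesis is used (closedness in the first half; recurrence for the coset realisation; level-transitivity to keep the rigid vertex stabilisers at each level conjugate, hence the finiteness condition on the $H_v$ the natural one), to be the main work.
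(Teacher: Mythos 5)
Your proof is correct, and the two directions deserve separate comments. The forward implication (regularly branched $\Rightarrow$ regular) is essentially the paper's argument in different clothing: the paper's branch structure $(\pi\colon G\to Q,\ \phi\colon Q_1\to Q)$ with $Q=G/K$ is exactly your coset automaton, and both proofs conclude by producing approximating elements $g_n\in G$ agreeing with the given portrait to depth $n$ and invoking closedness. The converse is where you genuinely diverge. The paper fixes a trim automaton, classifies elements by the set $Q(g)$ of initial states of accepting runs, takes $K_0=\langle gh^{-1}:Q(g)\cap Q(h)\neq\emptyset\rangle$ (finite index by pigeonhole), performs surgery on runs to realise each generator as a state $\ell@v$ of an element supported on $vX^*$, and then needs recurrence \emph{and} level-transitivity to transport these elements across a level; the payoff is only regular branching after coalescing $m$ levels, i.e.\ on the tree $(X^m)^*$. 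Your route through residuals and rigid stabilisers instead identifies the residual of $L_G$ at a context $p_{g_0}$ with hole at $v$ (for $g_0\in\mathrm{St}_G(v)$) as the portrait language of the coset $H_v\cdot(g_0@v)$ with $H_v=\mathrm{Rist}_G(v)@v$, uses recurrence to see that every coset $H_v\xi$ occurs, and extracts from Myhill--Nerode finiteness that the $H_v$ take finitely many values, all of finite index; intersecting them yields a branching subgroup on $X^*$ itself, with no level-coalescing and, as far as I can tell, no use of level-transitivity. The one step you should not leave implicit is the finiteness of residuals for regular languages of \emph{infinite} trees: for M\"uller acceptance the residual at a hole $v$ is determined by the set of pairs (state at $v$, set of states along the path from the root to $v$) realisable by partial runs on the context that are accepting on all rays avoiding $v$ --- a subset of $Q\times 2^Q$, hence finitely many possibilities; alternatively, since $L_G$ is closed you may reduce to safety automata, where the residual is determined by a subset of $Q$. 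The fact is true, but it carries real weight in your argument and deserves a line of proof or a citation.
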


We point out that in the ``if'' direction of the theorem above, we do not need that $G$ is level-transitive. 

\begin{theorem}[= Theorem~\ref{thm2}]\label{thmB}
Let $G$ be a contracting regular branched group. Then its set of portraits is a tree regular language.
\end{theorem}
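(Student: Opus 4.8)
The plan is to construct explicitly a tree automaton $\mathcal A$ over the alphabet $\Sym(X)$ whose language is exactly the set of portraits of $G$. Contraction will provide the ``depth'' side of the construction (a finite nucleus $\mathcal N$ that eventually controls every branch), while regular branchedness provides the ``breadth'' side (membership in $G$ being decidable inside a fixed finite quotient).

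First I would fix the nucleus $\mathcal N\subseteq G$: it is finite, state-closed ($\mathcal N@x\subseteq\mathcal N$), and for every $g\in G$ there is a level below which all sections $g@v$ lie in $\mathcal N$. I would also fix a finite-index branching subgroup $K\le G$; replacing $K$ by its normal core — which preserves both finiteness of index and the inclusion $\psi(G)\supseteq K^X$ coming from the geometric hypothesis that $G$ contains the copy of $K$ supported on each first-level subtree $xX^*$ — I may assume $K\trianglelefteq G$. The group-theoretic heart of the argument is then: $\psi$ induces a homomorphism $\bar\psi\colon G\to(G/K)\wr X$, $g\mapsto\big((g@x\,K)_x,\sigma_g\big)$, with finite image and finite-index kernel $L$; consequently $G/L$ embeds into $(G/K)\wr X$, the coset $gL$ already determines $\sigma_g$ and all the cosets $g@x\,K$, and — crucially, using $K\trianglelefteq G$ together with $K^X\subseteq\psi(G)$ — whether an arbitrary $f\in\Aut\T$ lies in $G$ depends only on the datum $\big((f@x\,K)_x,\sigma_f\big)$, namely on whether it belongs to the finite subgroup $\bar\psi(G)\le(G/K)\wr X$.

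Next I would build $\mathcal A$ as a nondeterministic top-down tree automaton with state set $(G/L)\sqcup\mathcal N$. From a ``coset state'' $gL$ at a node carrying label $\sigma$: reject if $\sigma\neq\sigma_g$; otherwise guess, for each $x\in X$, a coset $h_xL$ with $h_xK=g@x\,K$ and pass $h_xL$ to child $x$; additionally permit a silent transition to a ``nuclear state'' $\nu$ whenever $\nu\in\mathcal N\cap gL$. From a nuclear state $\nu$ at a node carrying label $\sigma$: reject if $\sigma\neq\sigma_\nu$; otherwise pass the nuclear state $\nu@x$ to child $x$, deterministically (legitimate since $\mathcal N$ is state-closed). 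Runs may start in any state, and a run is accepting iff along every branch it eventually enters — hence remains forever in — the nuclear states. This is a parity tree automaton of the simplest, ``weak'' kind, so its language is a regular tree language; it remains to see that this language equals the portrait set of $G$.

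For the inclusion ``$\supseteq$'': given $g\in G$, label each node $v$ by $g@v$ when $g@v\in\mathcal N$ and by $(g@v)L$ otherwise; this is a legal run, and it is eventually nuclear on every branch by contraction, hence accepting. For ``$\subseteq$'': given an accepting run on a portrait $p$, the set $S$ of nodes carrying coset states is prefix-closed and, by König's lemma, finite; let $g$ be the unique element of $\Aut\T$ with portrait $p$. On the frontier of $S$ the run is nuclear, so the restriction of $p$ to the subtree $vX^*$ is the portrait of the nuclear element $\nu_v$, whence $g@v=\nu_v\in G$; an upward induction over the finite tree $S$ then shows $g@v\in G$ at every node, because at an internal node the datum $\big((g@vx\,K)_x,\sigma_{g@v}\big)$ coincides with $\bar\psi$ of the coset state sitting at $v$, which lies in $\bar\psi(G)$, so the ``only the datum matters'' observation applies (this is precisely where $K\trianglelefteq G$ and $K^X\subseteq\psi(G)$ are used). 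In particular $g\in G$, so $p$ is a portrait of $G$. The conceptual difficulty the proof must overcome is that the section map $g\mapsto g@x$ does not descend to cosets of any single finite-index subgroup, so $g@v$ cannot be tracked with finite memory directly; contraction is what confines the ``unresolved'' part of a portrait to a finite subtree, below which the nucleus takes over, and regular branchedness is what makes the finite top-level coset data enough to certify membership in $G$. The step I expect to be most delicate is exactly this upward induction in the ``$\subseteq$'' direction — rebuilding an honest element of $G$ from a run that, below the frontier, records only cosets — along with checking that passing to the normal core of $K$ does not destroy $\psi(G)\supseteq K^X$.
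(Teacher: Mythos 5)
Your construction is essentially the paper's: states are a finite quotient of $G$ together with the nucleus $\mathcal N$, acceptance means every ray eventually becomes nuclear, completeness follows from contraction, and soundness is an upward induction over the finite non-nuclear part of the run. However, there is a genuine gap in that upward induction as you state it. Your induction records only that $g@v\in G$, but to apply your ``only the datum matters'' observation at the parent you must know that each child's section lies in the \emph{specific} coset $h_xK$ that the automaton guessed, i.e.\ the induction hypothesis has to be ``$g@v\in g_vK$'' rather than ``$g@v\in G$''. Membership in $G$ alone does not give this, and the same issue already appears at the frontier: from $\nu\in\mathcal N\cap g_vL$ you need $\nu\in g_vK$, i.e.\ $L\le K$, which you never establish.

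Both points are rescued by the paper's actual definition of regularly branched, namely $K^X\le\psi(K)$, which is strictly stronger than the inclusion $K^X\subseteq\psi(G)$ you say is ``precisely where'' branchedness is used. With $K^X\le\psi(K)$ one gets $L=\ker\bigl(G\to (G/K)\wr X\bigr)\le K$, and at an internal vertex the element $g_v^{-1}(g@v)$ fixes the first level and has all sections in $K$, hence lies in $\psi^{-1}(K^X)\subseteq K$, so $g@v\in g_vK$ and the induction closes; this is exactly the content of the paper's Lemma~\ref{preliminarylemma}, which corrects an element of $G$ within its $K$-coset so as to realize prescribed sections. With only $K^X\subseteq\psi(G)$ (even with $K$ normal) the coset-tracking can fail and the argument does not propagate. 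So: keep your automaton, but strengthen the induction hypothesis to coset membership and invoke $K^X\le\psi(K)$ (equivalently, Lemma~\ref{preliminarylemma}) at each internal vertex; the normal-core worry, by contrast, is harmless, since sections of conjugates of elements of $K$ are conjugates of sections by elements of $G$, so the core again satisfies the branching inclusion.
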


Theorems~\ref{thmA} and~\ref{thmB} solve some algorithmic decidability problems in regularly branched groups:
\begin{corollary}
Let  $G, H \leq \Aut\T$ be two contracting regularly branched groups; then there exists an algorithm that determines whether $G \leq H$. As a consequence, it is decidable if $G = H$.
\end{corollary}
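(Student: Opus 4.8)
The plan is to reduce the question ``$G\le H$'' to an inclusion problem for regular tree languages, which is decidable. Write $P(G)$ and $P(H)$ for the sets of portraits of the elements of $G$ and of $H$; these are subsets of the space of $\Sym(X)$-labelled rooted trees. Since the map sending an automorphism of $\T$ to its portrait is a bijection onto that space, we have $G\le H$ \emph{if and only if} $P(G)\subseteq P(H)$, with no topological subtlety. By \cref{thmB} both $P(G)$ and $P(H)$ are regular tree languages. Hence it suffices to (i) produce, from the given finite description of a contracting regularly branched group, a tree automaton recognizing its portrait set, and (ii) decide the inclusion of the two resulting regular tree languages.

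For step (i), I would revisit the construction underlying \cref{thmB} and check that each of its steps is algorithmic. A contracting regularly branched group $G$ is specified by a finite amount of data: its nucleus $\mathcal N$ (a finite, state-closed generating set, each element given as a finite-state automaton over $X\times X$) together with a finite-index branching subgroup $K$, presented by a finite generating set of finite-state automorphisms and certificates for the branching condition $K\times\cdots\times K\le\psi(K)$ and for $[G:K]<\infty$. From this, the proof of \cref{thmB} reads off a finite state set --- governed by the nucleus and by a transversal of $K$ in $G$ --- a transition relation, and an acceptance condition on the infinite branches of the tree; all of these should be computable from $\mathcal N$ and $K$ by the same procedure that proves the theorem. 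I do not expect a new idea to be needed here, only a careful accounting of effectivity.

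For step (ii), inclusion of regular tree languages is classically decidable: $P(G)\subseteq P(H)$ is equivalent to $P(G)\cap\bigl(\T\text{-labelled trees}\bigr)\setminus P(H)=\emptyset$, the class of regular tree languages is effectively closed under complementation and intersection, and emptiness of a tree automaton is decidable --- all of this going back to Rabin~\cite{rabin}. Thus the algorithm is: compute tree automata $\mathcal A_G$ and $\mathcal A_H$ for $P(G)$ and $P(H)$ as in step (i); compute an automaton for $\LL(\mathcal A_G)\cap\overline{\LL(\mathcal A_H)}$; and test it for emptiness. Then $G\le H$ holds exactly when this last language is empty. For the final assertion, apply the procedure twice, to $G\le H$ and to $H\le G$; $G=H$ holds if and only if both tests succeed.

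The main obstacle is step (i): turning the constructive proof of \cref{thmB} into an actual algorithm. Concretely one must be able to compute $[G:K]$ together with a transversal, to verify the branching certificate, and to extract the acceptance condition of the tree automaton, all from the input data --- so part of the work is to fix, once and for all, a precise input format for ``a contracting regularly branched group'' (a nucleus together with a branching subgroup and its certificates) for which every ingredient of the proof of \cref{thmB} becomes a terminating computation. Once this is in place, step (ii) is entirely standard.
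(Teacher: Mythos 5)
Your proposal is correct and follows essentially the same route as the paper: reduce $G\le H$ to inclusion of the portrait languages (legitimate, since portraits biject with tree isometries), invoke Theorem~\ref{thmB} to get regular tree languages, and decide inclusion via closure under complementation and intersection plus decidable emptiness, running the test in both directions for $G=H$. The only divergence is bookkeeping of effectivity: where you propose fixing an input format with the nucleus, branching subgroup and certificates so that the construction in Theorem~\ref{thmB} becomes a terminating computation, the paper assumes an oracle promising contraction and regular branching and asserts that the nucleus and branching subgroup can then be computed.
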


For a group $G$ acting on a tree $X^*$ and therefore on its boundary $X^\omega$, the \emph{orbit relation} is the equivalence relation
\[\mathcal O=\{(\xi,\eta)\in X^\omega\times X^\omega:\eta\in G\xi\}\subseteq (X\times X)^\omega.\]
\begin{corollary}
Let $G \leq \Aut\T$ be a contracting, regularly branched group; then its orbit relation is an $\omega$-regular language, and is computable.
\end{corollary}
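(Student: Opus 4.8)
The plan is to deduce this from Theorem~\ref{thmB} by a "projection onto a ray" construction for tree automata. Recall first how portraits encode the action on the boundary: an isometry $g\in\Aut\T$ with portrait $p$ sends $\xi=x_1x_2\cdots\in X^\omega$ to the ray whose $i$-th letter is $p(x_1\cdots x_{i-1})(x_i)$, the permutation attached by $p$ to the vertex $x_1\cdots x_{i-1}$ applied to $x_i$. Identifying an element of $(X\times X)^\omega$ with a pair $(\xi,\eta)$, $\xi=x_1x_2\cdots$, $\eta=y_1y_2\cdots$, we therefore have
\[
\mathcal O=\bigl\{(\xi,\eta):\exists\,p\in\mathcal P_G\text{ with }p(x_1\cdots x_{i-1})(x_i)=y_i\text{ for all }i\geq1\bigr\},
\]
where $\mathcal P_G$ denotes the set of portraits of elements of $G$, a set of $\Sym(X)$-labellings of $\T$. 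By Theorem~\ref{thmB}, $\mathcal P_G$ is a regular tree language, recognised by a finite tree automaton $\M$; moreover, since $G$ is contracting it is presented by a finite automaton (its nucleus, together with the branching data), and the proof of Theorem~\ref{thmB} should be effective enough to produce $\M$ from this presentation.

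Next I would manufacture from $\M$ a nondeterministic $\omega$-word automaton $\mathcal B$ over the alphabet $X\times X$ that recognises $\mathcal O$. First precompute the set $Q_{\mathrm{live}}$ of states $q$ of $\M$ from which the induced sub-tree-automaton has nonempty language; this is decidable, being an instance of the emptiness test for tree automata~\cite{rabin}. The automaton $\mathcal B$ has the same state set as $\M$; from a state $q$, on reading a letter $(x,y)$, it guesses a permutation $\pi\in\Sym(X)$ with $\pi(x)=y$ and a transition of $\M$ out of $q$ on label $\pi$ to a tuple $(q_{x'})_{x'\in X}$, it checks $q_{x'}\in Q_{\mathrm{live}}$ for every $x'\neq x$, and it moves to the state $q_x$; its initial states are the live initial states of $\M$, and its acceptance condition is the acceptance condition of $\M$ evaluated along a single branch.

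It then remains to check $\LL(\mathcal B)=\mathcal O$. A run of $\mathcal B$ on $(\xi,\eta)$ is precisely a run of $\M$ along the ray $\xi$ whose guessed labels $\pi_i$ realise, subject to $\pi_i(x_i)=y_i$, a portrait along $\xi$; the conditions $q_{x'}\in Q_{\mathrm{live}}$ for the off-ray children allow one to fill in the portrait on the subtrees hanging off $\xi$ and to complete the run of $\M$ there, yielding a portrait $p\in\mathcal P_G$ with $p(x_1\cdots x_{i-1})(x_i)=y_i$; conversely any accepting run of $\M$ on such a $p$ restricts to an accepting run of $\mathcal B$ on $(\xi,\eta)$. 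Since the acceptance condition of a tree automaton is the conjunction, over all branches, of one fixed $\omega$-condition on the state sequence, its restriction to the branch $\xi$ is exactly what $\mathcal B$ tests. Hence $\mathcal O=\LL(\mathcal B)$ is $\omega$-regular, and $\mathcal B$ is computable because $\M$ and $Q_{\mathrm{live}}$ are.

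The point that needs care is the acceptance condition: tree automata on infinite trees genuinely require a parity (equivalently Rabin or Muller) condition — Büchi tree automata are strictly weaker — so $\M$, and hence $\mathcal B$, may carry a parity rather than a Büchi condition; this is harmless, since nondeterministic parity $\omega$-word automata recognise exactly the $\omega$-regular languages. The only other thing to verify carefully is the gluing: independently chosen accepting sub-runs of $\M$ on the side subtrees must assemble, together with the run along $\xi$, into a single global accepting run of $\M$ on one portrait. This is immediate because the portrait is entirely unconstrained off the ray $\xi$, every $\Sym(X)$-labelling of $\T$ is the portrait of an isometry, and tree-automaton acceptance imposes no interaction between distinct branches beyond the vertices they share. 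I expect this gluing argument, together with the bookkeeping translating "portrait along a ray" into "run of $\mathcal B$", to be the substance of the write-up, the remainder being routine automata theory.
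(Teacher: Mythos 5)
Your construction is essentially the paper's: there too, the Müller tree automaton recognizing the portraits (from Theorem~\ref{thmB}) is converted into an ordinary Müller word automaton over the alphabet $X\times X$ by projecting its transitions along the input ray, via $\delta'=\{(q,(y,z),q_y):\exists(q,a,(q_x)_{x\in X})\in\delta \text{ with } a(y)=z\}$, keeping the same initial states and residual statesets. Your extra liveness test on the off-ray children and the explicit gluing argument for the inclusion $\LL(\mathcal B)\subseteq\mathcal O$ only spell out what the paper leaves implicit (its particular automaton has every state productive), so the proposal is correct and takes the same route.
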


The \emph{Hausdorff dimension} of profinite groups was first considered by Abercrombie in~\cite{MR1281541}. In our situation of a closed group $G$ acting on a rooted tree $X^*$, its Hausdorff dimension is computed as follows: let $G_n$ be the natural quotient of $G$ acting on $X^n$; then
\[\operatorname{Hdim}(G)=\liminf_{n\to\infty}\frac{\log\#G_n}{\log\#(\Aut X^*)_n}.\]
\begin{corollary}
Let $G \leq \Aut\T$ be a closed self-similar regularly branched group; then the Hausdorff dimension of $G$ is computable.
\end{corollary}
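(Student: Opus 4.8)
The plan is to reduce $\operatorname{Hdim}(G)$ to an elementary estimate on the orders $\#G_n$. Write $d=\#X$; then $\#(\Aut X^*)_n=(d!)^{(d^n-1)/(d-1)}$, so that, since $d^n/(d^n-1)\to1$,
\[
\operatorname{Hdim}(G)=\frac{d-1}{\log(d!)}\;\liminf_{n\to\infty}\frac{\log\#G_n}{d^n}.
\]
Each $\#G_n=[G:\operatorname{St}_G(X^n)]$ is the order of the finite group $G_n$ acting on the truncated tree $X^{[n]}$, and this order is computable from any finite presentation of $G$ of the kind considered here (a finite self-similar generating set, or a tree automaton). Thus everything reduces to understanding the sequence $b_n:=(\log\#G_n)/d^n$ well enough to identify its limit inferior and to approximate it effectively.

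First I would bound $\#G_{n+1}$ above and below in terms of $\#G_n$. For the upper bound, self-similarity says that the action of $g\in G$ on $X^{n+1}$ is determined by the root permutation $\sigma_g\in\Sym(X)$ together with the $d$ states $g@x$, each of which lies in $G$ and hence acts on $X^n$ in one of $\#G_n$ ways; so $\#G_{n+1}\le d!\,(\#G_n)^d$. For the lower bound I would use regular branchedness: for each first-level vertex $x$ the group $G$ contains a copy $K_{(x)}$ of the fixed finite-index subgroup $K$ acting only on the subtree $xX^*$. The $K_{(x)}$ have pairwise disjoint supports, so they generate inside $\operatorname{St}_G(X^1)$ a subgroup isomorphic to $K^d$, whose image in $G_{n+1}$ is the $d$-fold direct power of the image of $K$ in $G_n$; since that image has index at most $[G:K]$ in $G_n$, this gives $\#G_{n+1}\ge(\#G_n/[G:K])^d$. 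Taking logarithms, the two inequalities combine to
\[
|b_{n+1}-b_n|\le\frac{\delta}{d^n},\qquad\delta:=\max\bigl(\log(d!),\,\log[G:K]\bigr).
\]

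This makes $(b_n)$ Cauchy with an explicit geometric rate: $|b_m-b_n|\le\delta\,d^{-n+1}/(d-1)$ for $m\ge n$, so $b_n$ converges to a limit $L$ with $|L-b_n|\le\delta/(d^{n-1}(d-1))$, and in particular $\liminf_n b_n=\lim_n b_n=L$. Hence $\operatorname{Hdim}(G)=\frac{d-1}{\log(d!)}L$, and since each $b_n$ is computable and the error is controlled, $\operatorname{Hdim}(G)$ is a computable real number: one outputs $\frac{d-1}{\log(d!)}b_n$ for $n$ large enough that the displayed bound is below the requested precision. (One should not expect a rational value in general, so ``computable'' means here approximable to arbitrary accuracy.) Alternatively — and this is where \cref{thmA} enters — one may read $\#G_n$ as the number of depth-$n$ truncations of portraits in the tree regular language of $G$ and compute the rate $L$ by determinising a tree automaton for that language and running a transfer-matrix count on its live prefixes; but the elementary estimates above are simpler and already yield a sharp error term.

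The step I expect to need the most care is the lower bound: one has to extract exactly the right consequence of the definition of regularly branched — namely that the copies of $K$ sitting in the $d$ first-level subtrees jointly embed a copy of $K^d$ into $\operatorname{St}_G(X^1)$, which then surjects onto the $d$-fold power of the image of $K$ in $G_n$ inside $G_{n+1}$ — since this is what forces $\#G_{n+1}$ to be comparable to $(\#G_n)^d$ and is the only place the constant $[G:K]$ appears. One should also fix at the outset a model in which $G$ is ``given'': under the natural choices the finite quotients $G_n$, the subgroup $K$, and the index $[G:K]$ are all computable, after which the two displayed inequalities close the argument.
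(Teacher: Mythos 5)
Your argument is correct, but it takes a genuinely different route from the paper's. The paper simply invokes Theorem~\ref{thm1}: the portraits of $G$ form a regular tree language, the Hausdorff dimension is the entropy of that language, and it is obtained exactly by solving a finite linear system with one variable per state of the Müller automaton (following Siegenthaler). You instead work directly with the defining sequence: the two-sided estimate $(\#G_n/[G:K])^d\le\#G_{n+1}\le d!\,(\#G_n)^d$, whose lower half correctly extracts from $K^X\le\psi(K)$ a copy of $K^d$ in the level-one stabilizer mapping onto $(\bar K_n)^d\le G_{n+1}$, makes $b_n=\log\#G_n/d^n$ effectively Cauchy, so the $\liminf$ in the definition is in fact a limit and is approximable with an explicit geometric error bound. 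What each approach buys: yours is more elementary, bypasses entropy computations, and yields the (nice, not stated in the paper) byproduct that for closed regularly branched groups the dimension is a true limit with explicit rate; the paper's automaton method returns the exact value — indeed, by the cited work of Siegenthaler the dimension of such a closed regularly branched group is rational, so your parenthetical warning that one ``should not expect a rational value'' is misplaced, and your notion of computability (approximation to arbitrary precision) is strictly weaker than what the linear-system method delivers. Two small points to tighten: fix the input model as the paper does, namely the branch structure $(\pi\colon G\to Q,\ \phi\colon Q_1\to Q)$, which is the finite data determining the closed group; then $[G:K]=\#Q$ is read off directly, and the computability of $\#G_n$ — the one step you leave slightly vague — is best justified exactly as in your alternative remark, by counting depth-$n$ truncations admitting live partial runs in the (trimmed) automaton of Theorem~\ref{thmA}, since a closed group need not come with a finite generating set.
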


The paper is organized as follows: in Section~\ref{Section:branchgroups} we define groups acting on trees, with a particular emphasis on branched groups. Moreover, we consider branched groups in the more abstract setting of bisets. In Section~\ref{Section:treelanguages}, we give basic notions about tree languages, and in the last Section~\ref{Section:mainresults} we prove theorems~\ref{thmA} and~\ref{thmB} together with their corollaries.

\section{Branched groups}\label{Section:branchgroups}

In this Section we briefly summarize and recall the most important notions in the theory of groups of isometries of rooted trees and, more precisely, of branched groups. For more information on the topic see, for example, \cites{branch,delaharpe, baumslag}.

\subsection{Groups of isometries of rooted trees}\label{rootedtrees}
Let $\T$ be a \textit{$d$-regular rooted} tree: an oriented tree with a designated vertex called the root, and such that every vertex has $d$ immediate descendants and one immediate ancestor (except the root which has no ancestor).

Let $X$ be a finite alphabet with $d$ elements, $X^{n}$ the set of all words of length $n$ and $X^* = \bigcup_{n \in \mathbb{N}} X^{n}$; then the vertex set of $\T$ may be identified with $X^*$. We denote by $\epsilon$ the empty word, namely the root of the tree. The descendants of $u\in X^n$ are all words $u x\in X^{n+1}$, for $x\in X$. For every vertex $u\in X^*$, the subtree of $\T$ hanging from $u$ has vertex set $u X^{*}$ and is naturally identified with $\T$ via $uw\leftrightarrow w$. 
Let $\Aut\T$ denote the group of bijective maps of the set $X^{*}$ which preserve incidence, so are isometries of the graph $\T$. Let $\Sym(X)$ be the set of all permutations over the alphabet $X$. Considering separately the action of $\Aut\T$ on $X$ and on each subtree $x X^*$ for $x\in X$, we obtain an isomorphism
\[
\psi\colon \Aut\T \to  (\Aut\T)\wr X\coloneqq(\Aut\T)^X\rtimes\Sym(X)
\]
decomposing every isometry into a permutation of $X$ and a tuple of elements of $\Aut\T$. For $g\in\Aut\T$, we write $\psi(g)_1$, respectively $\psi(g)_2$ the elements of $\Aut\T^X$, respectively $\Sym(X)$; then $\psi(g)_1(x)$ is the map $v\mapsto x v\overset g\mapsto g(x v)=y w\mapsto w$, and $\psi(g)_2$ is the map $x\overset g\mapsto g(x)$.

\begin{dfn}[Portraits]\label{def:portrait}
Anticipating~\S\ref{Section:treelanguages}, a \emph{tree word} is a map $\T\to A$ for some alphabet $A$.
The \emph{portrait} of $g\in \Aut\T$ is the tree word $p_g\colon\mathcal T\to\Sym(X)$ defined inductively by $p_g(\epsilon)=\psi(g)_2$ and $p_g(x v)=p_{\psi(g)_1(x)}(v)$. The portrait map induces a homeomorphism between $\Aut\T$ and the set of all tree words $\mathcal T\to\Sym(X)$.
\end{dfn}

When considering subgroups $G \leq \Aut\T$, the following definitions clarify additional properties of the restriction to $G$ of the map $\psi\colon \Aut\T \to  \Aut\T \wr X$.

\begin{dfn}[Self-similar and recurrent groups]
Consider $G \leq \Aut\T$. It is \emph{self-similar} if $\psi$ restricts to a map $G\to G\wr X$, and \emph{recurrent} if furthermore for every $g\in G,v\in X^*$ there exists $h\in G$ with $h(v)=v$ and $h@v=g$.
\end{dfn}

For $g\in\Aut\T$ and $v\in X^*$, the \emph{state} of $g$ at $v$, written $g@v$, is defined inductively by $g@\epsilon=g$ and $g@(x v)=\psi(g)_1(x)@v$. Then $G$ is self-similar precisely when $g@v\in G$ for all $g\in G,v\in X^*$, and is recurrent if $(G_v)@v=G$ for all $v\in X^*$.

The portrait of $g\in\Aut\T$ may then be defined directly by $p_g(v)=\sigma_{g@v}$. Note that if $G$ is self-similar then its set of portraits is a ``differentially closed'' language, namely a language $\LL$ such that for every $w\in \LL$ and every $x\in X$ one has $\partial w/\partial x\in \LL$, where $(\partial w/\partial x)(v)=w(x v)$.

\begin{dfn}[Level-transitive groups]
A group $G$ acting on $\mathcal T$ is \emph{level-transitive} if it acts transitively on $X^n$ for all $n\in\mathbb N$; so the quotient of $\mathcal T$ by the action is a single ray.
\end{dfn}

\begin{dfn}[Contracting groups]\label{contracting}
A self-similar group $G \leq \Aut\T$  is \textit{contracting} if there exists a finite subset $F \subseteq G$ such that for every $g \in G$ the state $g@v$ belongs to $F$ for all $v\in X^*$ except finitely many. There is a minimal such set $F$ called the \emph{nucleus} of $G$, and denoted by $\mathcal{N}$.
\end{dfn}

\subsection{Branched groups and branch structure}
In this section we describe one of the most important classes of groups of isometries of rooted trees: branched groups. 

\begin{dfn}[Regularly branched groups]
A group $G$ acting on $\mathcal T$ is \emph{regularly branched} if there exists a finite index subgroup $K\le G$ with $K^X\le\psi(K)$.
\end{dfn}

\noindent This definition can be conveniently expressed as the existence of a \emph{branch structure}: a finite quotient $Q$ of $G$, a subgroup $Q_1\le Q\wr X$, and epimorphisms $\pi\colon G\twoheadrightarrow Q$ and $\phi\colon Q_1\twoheadrightarrow Q$. Indeed, with $K$ be as above assumed without loss of generality normal, set $Q\coloneqq G/K$ and $Q_1\coloneqq\psi(G)/K^X$, with $\pi\colon G\to Q$ the natural map and $\phi\colon \psi(G)/K^X\to \psi(G)/\psi(K)\cong G/K$.

\subsection{Branched bisets}\label{branchedbiset}

There exists a formalism of groups acting on trees that does not require fixing a model $X^*$ for the regular rooted tree: it is that of \emph{bisets}. A \emph{$G$-$G$-biset} is a set $B$ equipped with two commuting $G$-actions, written on the left and the right respectively. A \emph{covering biset} is a biset such that the right action is free, so one may write $B=X\cdot G$ when only considering the right action, by choosing a set of representatives $X\subseteq B$ of the right action. This set $X$ is called a \emph{basis} of $B$. The left action is then expressed as a map $G\times X\to X\times G$.

If a group $G$ acts self-similarly on a tree $\mathcal T=X^*$, its associated biset is $B=X\times G$ with left action given by $g\cdot(x,h)=(\psi(g)_2(x),\psi(g)_1(x)h)$. In other words, the map $\psi\colon G\to G\wr X$ induces naturally a map $G\to (X\times G)^X$ and therefore a map $G\times X\to X\times G$, which precisely defines the structure of a covering biset.

Conversely, given covering $G$-$G$-bisets $B$ and $C$, define their product $B\otimes_G C=(B\times C)/\{(b g,c)=(b,g c)\}$, and note that it is also a covering biset. More precisely, if $B\cong X\times G$ and $C\cong Y\times G$ qua right $G$-sets, then $B\otimes_G C\cong X\times Y\times G$ qua right $G$-set. Define the \emph{Fock tree} of $B$ as $\mathcal T=\bigsqcup_{n\ge0} B^{\otimes n}/G$, with natural left $G$-action. Note that if $B\cong X\times G$ then $\mathcal T\cong X^*$. We stress the point that $B$ determines canonically an action of $G$ on $\mathcal T$, but not on $X^*$ which requires a choice of identification $B\cong X\times G$. In fact, two different choices yield conjugate actions, but conjugate actions need not correspond to isomorphic bisets.

\begin{dfn}[Regularly branched bisets]
A $G$-$G$-biset $B$ is \emph{regularly branched} if there exists a finite index subgroup $K\le G$ and a basis $X$ of $B$ such that, for every $(k_x)\in K^X$, there exists $k\in K$ with $x k_x=k x$ for all $x\in X$.
\end{dfn}

\begin{lem}
The following hold:
\begin{enumerate}[label=(\roman*)]
    \item If $G$ acts on $X^*$ and is regularly branched, then its associated biset is regularly branched.
    \item If $B$ is regularly branched, then for every identification $B\cong X\times G$ the action on $X^*$ is regularly branched.
\end{enumerate}
\end{lem}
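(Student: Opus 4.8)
The plan is to reduce both statements to one bookkeeping identity that translates the biset condition into the condition on the action on $X^*$. Fix any basis $X$ of the covering biset $B$, so that $B\cong X\times G$ as right $G$-sets via $x\cdot g\leftrightarrow(x,g)$, and let $\psi_X\colon G\to G\wr X$ be the resulting map, determined by $k\cdot x=\sigma_k(x)\cdot h_{k,x}$ with $\sigma_k\in\Sym(X)$ and $h_{k,x}\in G$, so that $\psi_X(k)_2=\sigma_k$ and $\psi_X(k)_1(x)=h_{k,x}$. By construction $\psi_X$ is the map $\psi$ of the action of $G$ on $X^*$ that the identification $B\cong X\times G$ determines (at deeper levels one uses the induced bases $X^n$ of $B^{\otimes n}$). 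Since the right action is free, for $(k_x)_x\in K^X$ and $k\in G$ the equalities $x\cdot k_x=k\cdot x$ for all $x\in X$ hold exactly when $\sigma_k=\mathrm{id}$ and $h_{k,x}=k_x$ for all $x$, i.e.\ exactly when $\psi_X(k)$ is the tuple $(k_x)_x$ with trivial permutation. Hence the biset condition for the basis $X$ and the subgroup $K$ — that such a $k\in K$ exists for every $(k_x)_x\in K^X$ — is precisely $K^X\le\psi_X(K)$, which by definition says that the action of $G$ on $X^*$ coming from $B\cong X\times G$ is regularly branched over $K$. Call this equivalence $(\star_X)$.

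Part~(i) is then immediate: the biset associated with the given action on $X^*$ is $B=X\times G$ with the standard basis $\{(x,1):x\in X\}$, and the formula $g\cdot(x,h)=(\psi(g)_2(x),\psi(g)_1(x)h)$ shows that the map $\psi_X$ for this basis is the original $\psi$. So if $G$ is regularly branched over $K$, then $K^X\le\psi(K)$, and $(\star_X)$ exhibits $B$ as regularly branched (over $K$, with respect to the standard basis). No normality of $K$ is needed here.

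For part~(ii), suppose $B$ is regularly branched, witnessed by a finite-index $K\le G$ and a basis $X_0$. I would first arrange that $K\trianglelefteq G$, exactly as in the discussion of branch structures above: replace $K$ by $\operatorname{Core}_G(K)$ and check the condition persists. Indeed, for $(k_x)_x\in(\operatorname{Core}_G K)^X$ the unique $k\in\Aut\mathcal T$ with $\psi_{X_0}(k)_2=\mathrm{id}$ and $\psi_{X_0}(k)_1(x)=k_x$ already lies in $K$, and for $g\in G$ the coordinates of $\psi_{X_0}(g^{-1}kg)_1$ are $G$-conjugates of the $k_x$ — here self-similarity enters, ensuring these coordinates lie in $G$ — hence lie in $\operatorname{Core}_G(K)$ when the $k_x$ do, so $g^{-1}kg\in K$ for every $g$, i.e.\ $k\in\operatorname{Core}_G(K)$. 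Next, let $X$ be an arbitrary basis; indexing $X$ and $X_0$ by the right $G$-orbits of $B$ and using freeness of the right action, each $x\in X$ equals $\bar x\,g_x$ for the corresponding $\bar x\in X_0$ and a unique $g_x\in G$. Expanding $k\cdot x=(k\cdot\bar x)\,g_x$ shows that $\psi_X(k)$ is the conjugate of $\psi_{X_0}(k)$ by the element of $G\wr X$ with trivial permutation part and coordinates $(g_x)_x$. Since $K\trianglelefteq G$, that element normalises $K^X$, so $\psi_X(K)\supseteq K^X$, and $(\star_X)$ gives that the action on $X^*$ is regularly branched over $K$. As $X$ was arbitrary, (ii) follows.

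The only step that is not routine is the normalisation of $K$ in part~(ii): without it, a change of basis conjugates the candidate subgroup $K^X$ by an element of $G\wr X$ that need not preserve it, and one would instead have to produce a smaller finite-index subgroup. The core argument sketched above — the ``without loss of generality normal'' already invoked in the paper — is precisely what makes the change of basis harmless. Everything else ($(\star_X)$ itself, the identification $\psi_X=\psi$ for the standard basis, and the conjugation formula under change of basis) is a direct computation with the biset product $\otimes_G$ and the chosen wreath-product conventions, carried out in the order just described.
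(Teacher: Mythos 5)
Your proof is correct and takes essentially the same route as the paper: part~(i) is the same direct translation of $K^X\le\psi(K)$ into the biset condition for the standard basis, and part~(ii) is the same change-of-basis argument after arranging $K$ to be normal. The only differences are cosmetic: you actually justify the ``without loss of generality normal'' step via the core of $K$ (the paper simply asserts it, and as your argument shows this implicitly uses faithfulness of the action, a caveat the paper itself flags), and you phrase the base change as conjugation by $((g_x)_x,1)$ in $G\wr X$, whereas the paper computes $y k_y = x g_x^{-1}k_y = x (k_y)^{g_x} g_x^{-1} = k x g_x^{-1} = k y$ directly in the biset.
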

\begin{proof}
(i) Since $G$ is regularly branched, there exists a finite index subgroup $K\le G$ with $\psi(K^X)\le K$. Given now $(k_x)\in K^X$, set $k=\psi(k_x)$; we have $k x=x k_x$ as required. \\
(ii) Assume $B$ is regularly branched, for the subgroup $K$ which without loss of generality we suppose normal, and the basis $X$. Consider another basis $Y$. Then there exists a bijection $\pi\colon X\to Y$ and group elements $(g_x)\in G^X$ such that $x=\pi(x)g_x$ holds for all $x\in X$. Given now $(k_y)\in K^Y$, note that $(k_{\pi(x)}^{g_x})\in K^X$ because $K$ is normal. Let $k\in K$ be such that $x k_{\pi(x)}^{g_x}=k x$ for all $x\in X$, and compute, for $y=\pi(x)$,
\[y k_y = x g_x^{-1} k_y = x (k_y)^{g_x} g_x^{-1} = k x g_x^{-1} = k y;\]
so the ``there exists a basis'' in the definition of ``$B$ is regularly branched'' can be changed to a ``for every basis''.
\end{proof} 

In this section we have merely noted that the property of being regularly branched is independent of the choice of basis, so may be defined for bisets rather than tree actions. However, it would be more satisfying to express the definition more intrinsically, without any appeal to a basis. It could also be that the definition diverges from that of tree actions in case the group $G$ does not act faithfully. We leave these questions open for further investigation; the general point being that a biset should be branched if and only if the left action has finitely many orbits on bases. 

\section{Automata on infinite trees}\label{Section:treelanguages}

In this section we define automata on infinite trees, and defer to~\cite{tata} for a thorough introduction on automata on trees and tree languages.

Recall that an \emph{alphabet} is a finite set, and a \emph{regular tree} is a set of the form $\mathcal T=X^*$ for an alphabet $X$. Its vertices are thus finite sequences ($X^*$), and its boundary consists of infinite sequences ($X^\omega$) called \emph{rays}. For example, if $X=\{t\}$, the corresponding regular tree is geometrically an infinite half-line, with vertex set $\{t^n:n\in\N\}$, and it has a single ray $t^\omega$.

A \emph{tree word} is a map $w\colon\mathcal T\to A$, for a regular tree $\mathcal T$ and an alphabet $A$. For example, if $\mathcal T=\{t\}^*$, a tree word is a usual $\omega$-word. As we saw in Definition~\ref{def:portrait}, the portrait of a tree isometry is a tree word, with $A=\Sym(X)$.

A \emph{tree language} is a set of tree words, all with the same tree and alphabet; so again if a group acts on a tree $\mathcal T=X^*$ then the set of portraits of its elements is a tree language with alphabet $\Sym(X)$.

 \begin{dfn}[M\"uller automata]
 A \emph{M\"{u}ller tree automaton} on the tree $\T=X^*$ is a quintuple $\mathcal M = (Q, A, \delta, Q_i, Q_f)$, where
 \begin{itemize}
     \item $Q$ is a finite (non-empty) set of \emph{states};
     \item  $A$ is an alphabet;
     \item  $\delta \subseteq Q \times A \times Q^X$ is the \emph{transition rule};
     \item $Q_i \subseteq Q$ is the set of \emph{initial states};
     \item  $Q_f \subseteq 2^Q$ is the set of \emph{residual statesets}.
 \end{itemize}
 \end{dfn}
 
A \emph{run} of $\mathcal M$ is a tree word $w$ with alphabet $\delta$, such that $w(\epsilon)_1\in Q_i$ and for every $v\in X^*$ one has $w(v)_3(x)=w(v x)_1$ and for every ray $\xi\in X^\omega$ one has $\{w(v)_1:v\text{ prefix of }\xi\}\in Q_f$. In words, the run must start at an initial state, the states match along edges according to some rules in $\delta$, and the set of states seen along any ray is residual. The \emph{value} of a run $w$ is the tree word $\overline w$ with alphabet $A$ defined by $\overline w(v)=w(v)_2$. A tree word $\mathcal T\to A$ is \emph{accepted} by $\mathcal M$ if it is the value of a run of $\mathcal M$ (see ~\cite{mullercondition}).

\begin{dfn}[Regular tree languages]
A tree language is \emph{regular} if it is the set of accepted tree words of some Müller tree automaton.
\end{dfn}

We note the fundamental (and non-trivial) fact that the class of languages definable by non-deterministic M\"uller tree automata is closed under all boolean operations. 

\section{Main results}\label{Section:mainresults}
\noindent Here we prove Theorem~\ref{thmA} and~\ref{thmB} together with some applications.

\begin{thm}\label{thm1}
Let $G$ be a level-transitive, recurrent, closed self-similar subgroup of $\Aut(X^*)$. Then $G$ is regularly branched if and only if its set of portraits is a regular language.
\end{thm}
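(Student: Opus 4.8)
The plan is to treat the two directions separately, with the harder (and genuinely new) work being the ``only if'' direction. For the ``if'' direction, suppose the set $P$ of portraits of $G$ is a regular tree language, recognized by a Müller automaton $\mathcal M=(Q,\Sym(X),\delta,Q_i,Q_f)$. The key point is that $P$ is differentially closed (as noted after the definition of portraits) and, crucially, closed. I would first show that regularity of $P$ forces a ``pumping''-type uniformity: there is a bound $n_0$ such that the collection of subtree-portraits appearing at level $n_0$ already determines, up to the automaton's state, all deeper behaviour. Concretely, define $K$ to be the subgroup of $G$ consisting of elements whose portrait is trivial on the first $n_0$ levels; the finitely many states of $\mathcal M$ make $[G:K]$ finite, and differential closure plus the automaton structure let one graft copies of $K$-portraits into any single subtree $vX^*$ while staying inside $P$, so that $K^X\le\psi(K)$ after passing to a suitable level. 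This uses self-similarity and closedness but not level-transitivity, matching the remark in the paper.

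For the ``only if'' direction, assume $G$ is regularly branched with branch structure $(Q,Q_1,\pi\colon G\twoheadrightarrow Q,\phi\colon Q_1\twoheadrightarrow Q)$ coming from a finite-index normal $K$ with $K^X\le\psi(K)$. The plan is to build a Müller automaton whose states track, at each vertex $v$, the coset $\pi(g@v)\in Q$ — equivalently the ``branch portrait modulo $K$''. The transition rule $\delta$ encodes exactly which tuples $(\pi(g@vx))_{x\in X}\in Q^X$ are compatible with $\pi(g@v)$, i.e.\ lie in $Q_1$ under the natural map; the label read at $v$ is the permutation $\sigma_{g@v}$, which is determined by $\pi(g@v)$ together with the chosen transition (one enlarges the stateset by the finite data of $g@v$ modulo the level-$1$ portrait-stabilizer if needed). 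The subtlety is the acceptance (Müller) condition: a $\Sym(X)$-labelled tree is a portrait of some $g\in G$ iff along the run the local compatibility holds \emph{and} the limiting behaviour is consistent with membership in $G$ rather than merely in the profinite closure of the branch relations. Here is where recurrence, level-transitivity and closedness enter: recurrence guarantees that any locally compatible assignment of $Q$-cosets is realized by honest states $g@v\in G$ (the branch structure is ``onto'' in the needed sense), level-transitivity ensures the single-ray quotient so that one residual condition per orbit suffices, and closedness lets us conclude that a ray-wise consistent labelling actually comes from an element of $G$ and not just its closure. So $Q_f$ is taken to be all subsets of $Q$ (or all subsets meeting a fixed absorbing part), and correctness reduces to: a tree word over $\Sym(X)$ is accepted iff at every vertex its value is consistent with the branch structure, iff it is the portrait of an element of $\overline G=G$.

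The main obstacle I expect is proving that the Müller/ray condition is \emph{exactly} right — that local (level-by-level) compatibility with the branch structure, together with the three hypotheses, is not just necessary but sufficient for a tree word to be a genuine portrait of a group element. Necessity is routine bookkeeping; sufficiency is where one must use recurrence to lift cosets to actual states compatibly down every ray, level-transitivity to reduce the bookkeeping to a single ray (quotient tree a ray), and closedness to pass from ``approximated by elements of $G$'' to ``in $G$''. A clean way to organize this is to first prove a lemma: for a closed, recurrent, level-transitive self-similar $G$, an isometry $f\in\Aut(X^*)$ lies in $G$ iff $\pi(f@v)$ is defined consistently (i.e.\ the induced tuples always lie in $Q_1$) for all $v$; granting this lemma, the automaton construction and the verification that $\LL(\mathcal M)$ equals the portrait set of $G$ become essentially formal, and the boolean-closure remark is not even needed here. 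I would therefore spend the bulk of the proof on that lemma, and only sketch the translation into an explicit Müller automaton.
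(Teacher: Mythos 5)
Your overall architecture (an automaton whose states track the branch quotient $Q$ in one direction, and extraction of a finite-index branching subgroup from the automaton in the other) is the right one, but both directions have genuine gaps, and you have attributed the hypotheses to the wrong halves. For the direction ``portraits regular $\Rightarrow$ regularly branched'': your candidate $K$ (elements whose portrait is trivial on the first $n_0$ levels) is not tied to the automaton in the way the grafting step needs. To splice the portrait of $k$ into the subtree $vX^*$ of another accepted word and stay in the language, you need the two words to admit accepting runs sharing a \emph{state} at $v$; there is no reason that elements of a level stabilizer all do, and its finite index is automatic anyway, so the automaton is doing no work in your definition. The subgroup that works is generated by $\{gh^{-1}: Q(g)\cap Q(h)\neq\emptyset\}$, where $Q(g)$ is the set of initial states of accepting runs on the portrait of $g$ (finite index by pigeonhole on $\#Q$). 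More seriously, grafting only yields, for each such generator $k$, \emph{one} element $\ell$ with $\ell@v=k$ at one particular vertex $v$ (wherever the relevant state is reachable); to get $K^{X^m}\le\psi^m(K)$ you must place a copy of $K$ at \emph{every} vertex of some level and realize arbitrary tuples, and that is exactly where recurrence and level-transitivity are used (pass to the normal closure, transport $k^g$ to every $u\in X^n$ via an element fixing $u$ with state $g$ at $u$, and multiply the disjointly supported elements). You explicitly disclaim needing level-transitivity (and never invoke recurrence) in this direction; that reads the paper's remark backwards — it is the \emph{other} direction that needs neither — and your sketch offers no substitute mechanism, so the conclusion $K^X\le\psi(K)$ is unsupported.

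For the direction ``regularly branched $\Rightarrow$ portraits regular'': your automaton is essentially the correct one, but the entire content is the lemma you defer, namely that a tree word admitting a consistent $Q$-labelling is the portrait of an element of $G$, and the hints you give for it (recurrence to realize cosets, level-transitivity to ``reduce to one ray'') are not a proof and are not the mechanism. The actual argument needs only closedness: iterate the branch structure to get finite quotients $Q_n$ with $Q_{n+1}\le Q_n\wr X$, use the run to build elements $g_n\in G$ agreeing with the candidate isometry $t$ in $Q_n$ — here the branching relation $K^X\le\psi(K)$ is what lets one correct the labelling at deeper levels without disturbing what is already fixed — and then $t=\lim g_n\in G$ because $G$ is closed. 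Relatedly, the Müller residual condition is vacuous in this direction (one may take $Q_f=2^Q$), so the ``subtlety'' you locate in getting the ray condition exactly right is misplaced; the missing step is the coherent lifting/limit argument itself, which your proposal names but does not supply.
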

\begin{proof}
In the first direction, suppose $G$ is regularly branched, and let $(\pi\colon G\to Q, \phi\colon Q\wr X\ge Q_1\to Q)$ be a branch structure. Consider the following Müller automaton $\mathcal M$:
 \begin{itemize}
     \item $Q$ (defined above) is the set of states;
     \item  $A=\Sym(X)$;
     \item  $\delta \subseteq Q \times A \times Q^X$ with $(q,a,(q_x))\in\delta$ whenever $((q_x),a)$ defines an element $q_1\in Q_1$ and $\phi(q_1)=q$;
     \item $Q_i=Q$;
     \item  $Q_f \subseteq 2^Q$.
 \end{itemize}
 
Given $g\in G$, construct a run $w$ of $\mathcal M$ by $w(v)=(\pi(g@v),\sigma(g@v),(\pi(g@v x))_{x\in X})$, and note that its value is the portrait of $g$. Conversely, given a run $w$ of $\mathcal M$, note that its value defines a portrait and therefore an isometry $t$ of $X^*$. Furthermore, the run defines an element $w(\epsilon)_1\in Q$ so there exists $g\in G$ such that $g,t$ have same image in $Q$. Now the construction of $\mathcal M$ implies that for every $v\in X^*$ there exists $q_{1,v}=(w(v)_3,w(v)_2)\in Q_1$ with $\phi(q_{1,v})=w(v)_1$; thus there exists a lift $g_1\in G$ of $q_{1,\epsilon}$ such that $g_1,t$ have same image in $Q_1$. Continuing, there exists $q_{2,v}\in Q_2\le Q_1\wr X$ obtained by assembling $q_{1,v},(q_{1,v x})$, and a lift $g_2\in G$ of $q_{2,\epsilon}$ such that $g_2,t$ have same image in $Q_2$. In this way, we obtain a sequence of elements $g_n\in G$ such that $g_n,t$ have same image in $Q_n$. Since the $Q_n$ converge to $G$ and $G$ is closed, we get $t=\lim g_n\in G$.

Note that we did not use the hypotheses that $G$ be level-transitive nor recurrent.

In the other direction: assume that the language of portraits of the self-similar group $G$ is a regular language, and let $\mathcal M$ be a Müller machine recognizing it, with stateset $Q$. Without loss of generality, assume $\mathcal M$ is \emph{trim}: every state can be reached from an initial state, and leads to a residual set. For every $g\in G$, let $Q(g)$ denote the set of initial states of runs accepting the portrait of $g$, noting that $Q(g)$ is non-empty for all $g\in G$. Let $K_0$ denote the subgroup of $G$ generated by $\{g h^{-1}:Q(g)\cap Q(h)\neq\emptyset\}$.

We first note that $K_0$ is a finite index subgroup of $G$; more precisely, the index of $K_0$ is at most $\#Q$. Indeed, let $g_0,g_1,\dots,g_{\#Q}$ be arbitrary elements of $G$. Then the sets $Q(g_0),\dots,Q(g_{\#Q})$ cannot all be disjoint, since they are non-empty and more numerous than $Q$. Thus there exists $i$ and $j$ such that $Q(g_i)$ and $Q(g_j)$ intersect, so $g_i g_j^{-1}\in K_0$, and so $g_0,\dots,g_{\#Q}$ cannot be left coset representatives of $K_0$.

Consider next $k = g h^{-1}\in K_0$, and choose $q\in Q(g)\cap Q(h)$. Since $\mathcal M$ is trim, there exists an element $f\in G$ and a run $w$ accepting $f$ which reaches state $q$ at some vertex $v\in X^*$. Construct two runs $w_g,w_h$ by replacing in $w$ the subtree at $v$ respectively by the runs accepting $g,h$ in initial state $q$. These runs define respectively elements $f_g,f_h\in G$, both starting at the same initial state $r\in Q_i$. Therefore $Q(f_g)\cap Q(f_h)\ne\emptyset$, and thus $f_g f_h^{-1}\in K_0$. Furthermore, $f_g,f_h$ agree everywhere except on the subtree at $v$, so $f_g f_h^{-1}$ acts as $k$ on that subtree and trivially elsewhere. Thus, for every generator $k$ of $K_0$, we have constructed a vertex $v$ and an element $\ell\in K_0$ with $\ell@v=k$ and fixing the complement of $v X^*$.

We now let $K$ denote the normal closure of $K_0$ in $G$, and continue our reasoning with $\ell@v=k$. Write $|v|=n$, and note $n\le\#Q$ since we may have chosen a simple path from an initial state to $q$ in the construction of the run $w$. Since $G$ is recurrent and acts transitively on $X^n$, we obtain for arbitrary $u\in X^n,g\in G$ an element $\ell_{u,k}\in K$ acting as $k^g$ on the subtree $u X^*$ and fixing its complement.

Note that the choice of parameter $n$ depended only on the state $q$, so there exists a common multiple of all such choices, call it $m$. We deduce that $K^{X^m}$ is a subgroup of $K$, and $G$ is regularly branched with branching subgroup $K$, albeit on the tree $(X^m)^*$ obtained by coalescing $m$ levels into one.
\end{proof}

\noindent The following lemma will be useful for the proof of Theorem~\ref{thmB}.
\begin{lem}\label{preliminarylemma}
Let $g_x\in G$ and $a\in\Sym(X)$ such that there exists $g\in G$ with $\pi(\psi_1(g)(x))=\pi(g_x)$.
Then there exists $g'\in G$ with $\psi_1(g')(x)=g_x$.
\end{lem}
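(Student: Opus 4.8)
The statement asks: given a tuple prescribed only up to the finite quotient $Q$ at one coordinate $x$, we can realize it exactly. The plan is to combine the hypothesis with the branch structure $(\pi\colon G\twoheadrightarrow Q,\ \phi\colon Q_1\twoheadrightarrow Q)$ of the regularly branched group $G$, together with the distinguishing feature of $K=\ker\pi$: for every $v\in X^*$, $G$ contains a copy of $K$ acting only on $v X^*$, i.e.\ $K^X\le\psi(K)$ (after passing to the appropriate level if necessary; for the purposes of this lemma we may work directly from the definition $K^X\le\psi(K)$).

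First I would fix $g\in G$ with $\pi(\psi_1(g)(x))=\pi(g_x)$, as supplied by the hypothesis, and set $h_x\coloneqq \psi_1(g)(x)$. Then $h_x^{-1}g_x\in K$, since $\pi(h_x^{-1}g_x)=\pi(h_x)^{-1}\pi(g_x)=1$. Next I would invoke $K^X\le\psi(K)$ to find $k\in K$ with $\psi_1(k)(y)=\delta_{y,x}\,(h_x^{-1}g_x)$ for all $y\in X$ — that is, $\psi_1(k)(x)=h_x^{-1}g_x$ and $\psi_1(k)(y)=1$ for $y\ne x$ — and $\psi_2(k)=1$ (the tuple $(h_x^{-1}g_x$ in coordinate $x$, $1$ elsewhere$)$ lies in $K^X$, hence in $\psi(K)$, and since $K\le\ker(\psi_2)$ for branched groups we may take the permutation part trivial). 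Setting $g'\coloneqq gk$ gives $\psi_1(g')(x)=\psi_1(g)(x)\,\psi_1(k)(x)=h_x\cdot h_x^{-1}g_x=g_x$, as desired, and $g'\in G$ since $G$ is self-similar and closed under multiplication.

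The one point that requires care — and the main obstacle — is the level at which $K^X\le\psi(K)$ actually holds: the proof of Theorem~\ref{thm1} only guarantees $K^{X^m}\le K$ after coalescing $m$ levels. So the clean argument above works verbatim only if $m=1$; in general I would replace $X$ by $X^m$ throughout, noting that $\psi_1$ and $\pi$ behave compatibly under this coalescence (the composite quotient map and the iterated wreath decomposition are unchanged), and that the hypothesis on the single coordinate $x\in X$ extends: picking any word $\bar x\in X^m$ beginning with $x$ and using recurrence of $G$ to adjust the deeper coordinates, one reduces to the $m=1$ case on the tree $(X^m)^*$. Alternatively, if the ambient convention in this section already takes $K$ to satisfy $K^X\le\psi(K)$ on the given tree (as in the definition of regularly branched bisets in~\S\ref{branchedbiset}), then the first paragraph's argument is complete as written and no coalescence is needed.
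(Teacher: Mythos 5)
Your proof is correct and follows essentially the paper's own argument: the error term $\psi_1(g)(x)^{-1}g_x$ lies in $K=\ker\pi$, the branching property $K^X\le\psi(K)$ supplies $k\in K$ whose state at $x$ is this error (and whose top permutation is automatically trivial because $\psi(k)$ lies in the base group $K^X$ --- you do not need, and it is not true in general, that $K\le\ker\psi_2$), and $g'=gk$ does the job. Your second-paragraph worry about coalescing $m$ levels is moot, since ``regularly branched'' is defined in this paper by $K^X\le\psi(K)$ on the given tree (the level-$m$ subtlety only appears in the converse direction of Theorem~\ref{thm1}), so your first paragraph already constitutes a complete proof.
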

\begin{proof}
By hypothesis we have $k_x\coloneqq\psi_1(g)(x)^{-1}g_x\in K$ for all $x\in X$; and thus there exists $k\in K$ with $k@x=k_x$. Consider $g'=k g$, and compute $\psi_1(g')(x)=k_x\psi_1(g)(x)=g_x$.
\end{proof}

\begin{thm}\label{thm2}
If $G$ is a contracting regularly branched group, then its set of portraits is a regular language.
\end{thm}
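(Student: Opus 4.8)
The plan is to build a M\"uller tree automaton $\mathcal M$ that directly recognizes the portrait language $P(G)=\{p_g:g\in G\}$, combining the branch structure $(\pi\colon G\twoheadrightarrow Q,\ \phi\colon Q_1\twoheadrightarrow Q)$ used in Theorem~\ref{thm1} with the nucleus $\mathcal N$. One cannot simply pass to the closure $\overline G$ and quote Theorem~\ref{thm1}: $\overline G$ need not be contracting, Theorem~\ref{thm1} moreover requires level-transitivity and recurrence, which we do not assume here, and in any case $P(\overline G)\supsetneq P(G)$ in general. I rely on two standard facts about contracting self-similar groups: the nucleus $\mathcal N$ is finite and state-closed, i.e.\ $\mathcal N@X\subseteq\mathcal N$; and for every $g\in G$ the set $B_g=\{v\in X^*:g@v\notin\mathcal N\}$ is finite and prefix-closed — prefix-closed because $g@v\in\mathcal N$ forces $g@(vw)\in\mathcal N$ — hence a finite subtree, on whose frontier every state of $g$ lies in $\mathcal N$.

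I take $\mathcal M$ to have stateset $Q\sqcup\mathcal N$, alphabet $\Sym(X)$, every state initial, and every subset of $\mathcal N$ as a residual stateset. From a state $q\in Q$ on a letter $a\in\Sym(X)$ I allow transitions to tuples $(s_x)_{x\in X}\in(Q\sqcup\mathcal N)^X$ such that $((\bar s_x)_x,a)$ defines an element $q_1\in Q_1$ with $\phi(q_1)=q$, where $\bar s_x$ denotes $s_x$ if $s_x\in Q$ and $\pi(s_x)$ if $s_x\in\mathcal N$. From a state $h\in\mathcal N$ I allow a transition only on the letter $a=\sigma_h$, and then only to the forced tuple $(h@x)_{x\in X}$. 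So on its $Q$-states $\mathcal M$ behaves like the automaton of Theorem~\ref{thm1}, while $\mathcal N$-states are absorbing and deterministic; the M\"uller condition forces every ray to fall eventually into an $\mathcal N$-state, since no $Q$-state can recur along a ray.

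For soundness, given $g\in G$, the labelling assigning $\pi(g@v)$ to each $v\in B_g$ and $g@v$ to every remaining vertex is an accepting run of value $p_g$: the transition constraint at $v\in B_g$ is exactly that the image of $\psi(g@v)$ in $Q\wr X$ lies in $Q_1$, and below the frontier each nucleus element reproduces its own portrait. For completeness, take an accepting run of value $w$; by the M\"uller condition and state-closedness its $Q$-labelled part $B$ is a prefix-closed subset of $X^*$ meeting every ray finitely, hence (K\"onig) a finite subtree, and below its frontier the run is forced, so that $w$ restricts on each frontier subtree to the portrait of the corresponding nucleus element. Let $t\in\Aut\T$ be the isometry whose portrait is $w$; then $t@v\in\mathcal N\subseteq G$ for every $v$ on the frontier of $B$. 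One then shows by induction on $B$, working from the frontier toward the root, that $t@v\in G$ and that $\pi(t@v)$ equals the run's $Q$-label at $v$: granting this at all children $vx$, the transition constraint at $v$ reads $((\pi(t@vx))_x,\sigma_{t@v})\in Q_1$, so Lemma~\ref{preliminarylemma} produces $g'\in G$ with $g'@x=t@vx$ for all $x$, and with $\sigma_{g'}=\sigma_{t@v}$ by its proof; hence $\psi(g')=\psi(t@v)$ and so $t@v=g'\in G$, while $\pi(t@v)=\pi(g')=\phi(q_1)$ is the claimed label. In particular $t=t@\epsilon\in G$ (the induction applies equally when $\epsilon$ lies on the frontier), so $w=p_t\in P(G)$, and $\mathcal M$ recognizes exactly $P(G)$.

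The main difficulty is conceptual rather than computational. One has to see that contraction is precisely what makes the non-nucleus region of a portrait a finite subtree — a feature that a M\"uller (in fact co-B\"uchi) condition can detect, and exactly the feature distinguishing $P(G)$ from the portrait language of the generally non-contracting closure $\overline G$ — while regular branchedness, through the lifting statement of Lemma~\ref{preliminarylemma}, is precisely what allows an honest element of $G$ to be reconstructed from a portrait that is locally branch-consistent and has stabilized into the nucleus. Once these two points are identified, the verification that $\mathcal M$ does the job is routine bookkeeping; one should nonetheless be careful to invoke the classical facts that the nucleus of a contracting self-similar group is finite and state-closed.
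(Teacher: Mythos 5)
Your construction is essentially the paper's own proof: the same Müller automaton with stateset $Q\sqcup\mathcal N$, all states initial, residual sets $2^{\mathcal N}$, nucleus states deterministic and absorbing, soundness via the finiteness of $\{v:g@v\notin\mathcal N\}$, and completeness via König's lemma followed by a frontier-to-root induction using Lemma~\ref{preliminarylemma}. The only (harmless) variations are that you permit mixed $Q$/$\mathcal N$ tuples in a single transition where the paper switches to $\mathcal N$ only once all children lie in the nucleus, and that you spell out the induction and the $\sigma$-matching in Lemma~\ref{preliminarylemma} somewhat more explicitly.
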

\begin{proof}
We modify the construction in the proof of Theorem~\ref{thm1} as follows. Let $\mathcal N$ be the nucleus of $G$. As stateset, we use $Q'=Q\sqcup\mathcal N$. We set $Q'_i=Q'$ and $Q'_f=2^{\mathcal N}$. We extend the transitions $\delta$ to $\delta'$ by allowing transitions from $Q'$ to $\mathcal N$. More precisely, we add to $\delta$ for all $g\in G$ the transitions $(\pi(g),\sigma_g,(g@x)_{x\in X})$ whenever $g@x\in\mathcal N$ for all $x\in X$, and for all $g\in\mathcal N$ the transitions $(g,\sigma_g,(g@x)_{x\in X})$. Thus an accepting run must reach state $\mathcal N$ on every ray, and so by König's lemma may have only finitely many states in $Q'\setminus\mathcal N$.

On the one hand, it is clear that every $g\in G$ is accepted, since almost all states of $g$ belong to $\mathcal N$. Conversely, consider an accepting run. It yields by restriction a finite subtree of $X^*$, with elements of $\mathcal N$ at its leaves and elements of $Q\times A$ at its internal vertices. We construct by induction an element of $G$ from it. If the subtree has a single vertex, then it is labelled by an element of $\mathcal N$ and therefore directly produces an element of $G$. Otherwise, consider an internal vertex of maximal height, so all its descendants are labelled by elements of $G$. We may replace the internal vertex's label by an element of $G$, and proceed by induction, owing to Lemma~\ref{preliminarylemma}.
\end{proof}

Finally we list some consequences of Theorems~\ref{thmA} and~\ref{thmB}. These all pertain to decidability questions.

When given a contracting regularly branched group, the data defining it are assumed to be in the form of a finite generating set $S$, the restriction of $\psi$ to $S$, and an oracle promising that the action is faithful, contracting and regularly branched.
\begin{cor}
Let $G, H \leq \Aut\T$ be two contracting regularly branched groups. Then there exists an algorithm that determines whether $G \leq H$. As a consequence, it is decidable if $G=H$.
\end{cor}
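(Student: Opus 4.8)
The plan is to reduce the containment question $G\le H$ to a language-inclusion problem for regular tree languages, which is decidable. First I would invoke Theorem~\ref{thmB}: since both $G$ and $H$ are contracting regularly branched groups, their sets of portraits $P(G)$ and $P(H)$ are regular tree languages, and moreover the proof of Theorem~\ref{thm2} is effective --- given the generating set $S$, the restriction of $\psi$ to $S$, and the promise of contraction, one can compute the nucleus $\mathcal N$ (by the standard procedure: close $S\cup S^{-1}$ under taking states until the set stabilizes), then compute a branch structure $(\pi\colon G\to Q,\phi)$ (the finite quotients $G_n$ acting on $X^n$ are computable, and one searches for the smallest $n$ at which $K=\ker(G\to G_n)$ satisfies $K^X\le\psi(K)$, which the oracle guarantees exists), and finally assemble the Müller automaton $\mathcal M'$ described in that proof. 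Thus from the input data for $G$ and for $H$ we can explicitly construct Müller tree automata $\mathcal M_G$, $\mathcal M_H$ with $\LL(\mathcal M_G)=P(G)$ and $\LL(\mathcal M_H)=P(H)$.

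Next I would observe that $G\le H$ as subgroups of $\Aut\T$ if and only if $P(G)\subseteq P(H)$: the portrait map is a bijection between $\Aut\T$ and the set of all $\Sym(X)$-tree words (Definition~\ref{def:portrait}), so $g\in H$ exactly when $p_g\in P(H)$, and hence every element of $G$ lies in $H$ precisely when every portrait of an element of $G$ is a portrait of an element of $H$. Therefore deciding $G\le H$ amounts to deciding the inclusion $\LL(\mathcal M_G)\subseteq\LL(\mathcal M_H)$ of regular tree languages. This is decidable: by the fundamental closure fact recalled at the end of Section~\ref{Section:treelanguages}, the class of regular tree languages is effectively closed under complement and intersection, so one forms an automaton recognizing $\LL(\mathcal M_G)\cap\LL(\mathcal M_H)^{\mathsf c}$ and tests it for emptiness --- emptiness of the accepted language of a Müller tree automaton is decidable (it reduces to a finite reachability/parity-game computation on the automaton). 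The inclusion holds if and only if this language is empty. Finally, $G=H$ is decided by running this test both ways, i.e.\ checking $P(G)\subseteq P(H)$ and $P(H)\subseteq P(G)$.

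The main obstacle I expect is not the language-theoretic part, which is essentially bookkeeping on top of Rabin-style results, but making the construction of $\mathcal M_G$ genuinely effective from the promised data: one must extract a branch structure algorithmically. Computing the nucleus terminates by the contraction promise, and the branch structure exists by the regularly-branched promise, but one needs to argue that a finite search locates them --- for the nucleus, that the state-closure of $S\cup S^{-1}$ stabilizes after finitely many rounds once it contains $\mathcal N$; for the branch structure, that testing $K_n^X\le\psi(K_n)$ for $K_n=\ker(G\twoheadrightarrow G_n)$ is a finite computation inside the finite group $G_n\wr X$, and that the oracle's promise guarantees success for some $n$. Once these finite data are in hand, Theorem~\ref{thmB}'s construction is explicit, and the remainder is the standard decidability of inclusion for regular tree languages.
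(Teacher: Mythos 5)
Your proposal is correct and follows essentially the same route as the paper: invoke Theorem~\ref{thmB} to get regular tree languages of portraits for $G$ and $H$ (with the nucleus and branch structure obtained effectively, which the paper delegates to the oracle), identify $G\le H$ with inclusion of portrait languages via the injectivity of the portrait map, and conclude by the decidability of inclusion for regular tree languages. The extra detail you supply on computing the nucleus, extracting a branch structure, and reducing inclusion to complementation, intersection and emptiness testing is exactly the bookkeeping the paper's terse proof leaves implicit.
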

\begin{proof}
Suppose that there exists an oracle that recognises whether a group is contracting and branched. Then one can algorithmically determine its nucleus and its branching subgroup. Now given two contracting regular branched groups  $G$ and $H$, by Theorem~\ref{thm2} their languages of portraits are regular, and the inclusion in regular languages is decidable. This completes the proof. 
\end{proof}

On the other hand, a closed regularly branched group is determined by the finite data of its branched structure.
\begin{cor}
Let $G \leq \Aut\T$ be a closed self-similar regularly branched group. Then the Hausdorff dimension of $G$ is computable.
\end{cor}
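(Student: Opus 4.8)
The plan is to reduce the computation of $\operatorname{Hdim}(G)$ to counting arguments inside the finite branch structure $(\pi\colon G\twoheadrightarrow Q,\ \phi\colon Q_1\twoheadrightarrow Q)$, which by assumption is given to us explicitly. Write $q=\#Q$, $d=\#X$, and recall $\#(\Aut X^*)_n=(d!)^{(d^n-1)/(d-1)}$, so the denominator in the limit is a computable explicit function of $n$. Thus it suffices to show that $\#G_n$, or at least its growth rate, is computable. First I would observe that, since $K^{X^m}\le\psi^m(K)$ for the branching subgroup $K$ (possibly after coalescing $m$ levels, by Theorem~\ref{thm1}), the kernel of $G_n\twoheadrightarrow G_{n-m}$ contains $K^{X^{n-m}}/(\text{level-}m\text{ data})$, which forces $\#G_n$ to grow like $(\#K)^{d^{n}/\#(\text{something})}$ up to sub-exponential-in-$d^n$ corrections; the point is that the "defect" $\#G_n/(\#K)_n^{?}$ is eventually periodic in $n$.

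More concretely, the key step is to set up a linear recursion for $\log\#G_n$. For each coset $gK\in Q$ let $G_n(g)$ be the image in $(\Aut X^*)_n$ of the set of elements of $G$ whose class in $Q$ is $g$; by the branch structure, an element of $G$ with class $g$ is determined at level $n$ by: a choice of $q_1\in Q_1$ with $\phi(q_1)=g$ (finitely many), the induced permutation datum at the root, and for each $x\in X$ an element of $G$ at level $n-1$ whose $Q$-class is the $x$-component of $q_1$. This yields $\#G_n = \big(\text{bounded factor}\big)\cdot\prod_{x\in X}\#G_{n-1}(g_x)$-type relations; taking logarithms and tracking the vector $(\log\#G_n(g))_{g\in Q}$ one gets that this vector satisfies, up to a bounded additive error, the linear recursion governed by the $\#Q\times\#Q$ nonnegative integer matrix $M$ recording, for each $g$, how the states below a lift of $g$ distribute among the cosets. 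The growth rate of $\#G_n$ is then $\log(\text{Perron eigenvalue of }M)$ divided by the corresponding quantity for the full automorphism group (which is just the matrix $d\cdot(\text{all-ones})$ on a single state, Perron value $d$), and both are computable: Perron eigenvalues of explicit integer matrices are computable to any precision, and in fact one only needs $\liminf$, which this argument shows equals the honest limit.

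Hence $\operatorname{Hdim}(G)=\dfrac{\log\lambda_{\max}(M)}{\log d}\cdot\dfrac{d-1}{d!}$-type closed form — more precisely $\operatorname{Hdim}(G)=\lim_n \frac{\log\#G_n}{\log\#(\Aut X^*)_n}$ is the ratio of exponential growth rates $\frac{h(G)}{h(\Aut X^*)}$ with $h(G)=\log\lambda_{\max}(M)$ and $h(\Aut X^*)=d\log(d!)^{?}$, all extracted algorithmically from the branch structure. I would finish by noting that faithfulness and closedness are used only to ensure that $\#G_n$ is exactly $\#(G/\operatorname{Stab}_G(X^n))$ and that $G=\varprojlim G_n$, so no information is lost in the limit.

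The main obstacle I expect is the bookkeeping that turns the multiplicative branch relation into a genuinely \emph{linear} recursion with a \emph{bounded} error term: a priori the number of lifts $q_1\in Q_1$ over a given $g\in Q$, and the combinatorial overlap between the images $G_n(g)$ for different $g$, could interact in a way that is only eventually periodic rather than constant. Controlling this requires showing that the "correction factors" stabilize — essentially that once $n$ exceeds the depth needed to see all of $Q$ and all transitions, the ratios $\#G_n(g)/\#G_{n-1}(g')$ depend only on the branch structure and not on $n$ — and this is where the hypotheses that $K$ has finite index and that $G$ is self-similar (so the tree of states is itself governed by $Q$) do the real work. Once that periodicity is established, extracting the Perron eigenvalue and hence the Hausdorff dimension is routine.
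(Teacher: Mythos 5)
Your overall plan --- count $\#G_n$ by classes $g\in Q$ using the branch structure, which is the same finite data as the automaton of Theorem~\ref{thm1} --- is in the spirit of the paper's proof (which computes the Hausdorff dimension as the entropy of the regular tree language of portraits, via a linear system with one variable per state, following Siegenthaler). But as written your argument has two genuine problems. First, the step you yourself flag and defer (``once that periodicity is established\dots'') is precisely the mathematical content, and the idea that closes it never appears in your proposal: since $G$ is closed (hence profinite) and $K=\ker\pi$ is a closed subgroup of finite index, $K$ is \emph{open} and therefore contains a level stabilizer $\operatorname{Stab}_G(m)$. For $n\ge m$ the class $\pi(g)$ is determined by the depth-$n$ truncation, so the sets $G_n(g)$, $g\in Q$, are pairwise disjoint cosets of the level-$n$ image $K_n$ of $K$ (in particular all of the same size, so your matrix of ``how classes distribute'' is beside the point), and the branching property (as in Lemma~\ref{preliminarylemma}) gives surjectivity of the decomposition, yielding the \emph{exact} recursion $\#K_{n+1}=\#\ker\phi\cdot(\#K_n)^{d}$ for $n\ge m$. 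For computability you must moreover \emph{find} such an $m$ effectively (e.g.\ by testing successive levels against the automaton); ``the correction factors stabilize'' is not enough without a certified stabilization level, and none of this is supplied.

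Second, and more seriously, your extraction of the dimension is wrong. Every state spawns exactly $d$ children, so any matrix $M$ recording how the children of a lift distribute among the cosets has all row sums equal to $d$; its Perron eigenvalue is $d$, the same as for $\Aut\T$, and your ``ratio of exponential growth rates'' would equal $1$ for every closed self-similar regularly branched group, which is absurd (e.g.\ the closure of the Grigorchuk group has dimension $5/8$). The point is that $\log\#G_n\sim C_G\,d^n$ and $\log\#(\Aut\T)_n\sim \frac{d^n}{d-1}\log d!$ have the \emph{same} doubly exponential base; the dimension is the ratio of the coefficients, $\operatorname{Hdim}(G)=C_G(d-1)/\log d!$, and $C_G$ is built exactly from the data you discard as ``bounded additive error'': the per-state transition counts ($\log\#\ker\phi$), accumulated over the exponentially many vertices, together with the initial value $\#K_m$ at the stabilization level. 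This is why the paper's linear system has one variable per state representing these entropy densities, with the transition counts as inhomogeneous terms --- not an eigenvalue computation. (Your closed-form guesses, e.g.\ the factor $\frac{d-1}{d!}$ in place of $\frac{d-1}{\log d!}$, reflect this confusion.) With the open-subgroup argument and the exact recursion above, the corollary does follow, and in a form even more explicit than the Perron formalism; but as it stands the proposal both omits the key step and would compute the wrong number.
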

\begin{proof}
By Theorem~\ref{thm1}, there exists a tree automaton that recognizes elements of the portrait of a closed group $G$. The Hausdorff dimension of $G$ is the entropy of its language of portraits, so is computable from the automaton.

More directly, a linear system of equations can easily be set up, one variable per state of the automaton, whose solution yields the Hausdorff dimension of $G$, see~\cite{Siegenthaler2010}.
\end{proof}

We finally to turn to the orbit structure of contracting, regularly branched groups. Say the group $G$ acts on the regular tree $\T=X^*$; so its acts on its boundary $\partial\T=X^\omega$. The \emph{orbit relation} is the subset
\[\mathcal O=\{(\xi,\eta)\in X^\omega\times X^\omega:\eta\in G\xi\}\subseteq (X\times X)^\omega.\]
\begin{cor}
Let $G \leq \Aut\T$ be a contracting, regularly branched group. Then the equivalence relation $\mathcal O$ is an $\omega$-regular language, and is computable.
\end{cor}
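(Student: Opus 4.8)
The plan is to build a Müller tree automaton recognizing $\mathcal O$ as a subset of $(X\times X)^\omega$, i.e.\ a language of $\omega$-words (the tree here is the half-line $\{t\}^*$, so ``tree automaton'' degenerates to a Büchi/Müller $\omega$-automaton). The key observation is that $(\xi,\eta)\in\mathcal O$ precisely when there exists $g\in G$ with $g(\xi)=\eta$, and such a $g$ is witnessed by its portrait, which by Theorem~\ref{thmB} is accepted by a tree automaton $\mathcal M$ over the alphabet $\Sym(X)$. The difficulty is that $g$ itself is an infinite object and $\xi$ is only ``explored'' along one ray; so I cannot simply guess all of $g$. Instead I exploit contraction: along the ray $\xi$, the state $g@(\xi_1\cdots\xi_n)$ eventually lies in the nucleus $\mathcal N$, so only finitely much data about $g$ is relevant to computing $\eta$ from $\xi$.

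First I would set up the automaton reading $(\xi,\eta)\in(X\times X)^\omega$ letter by letter. Its states record a state $q$ of the portrait-automaton $\mathcal M$ (to certify that the guessed portrait is realized by some $g\in G$) together with, once we have entered the nucleus, the current nucleus element $h=g@(\xi_1\cdots\xi_n)\in\mathcal N$. On reading the pair $(\xi_{n+1},\eta_{n+1})$ the automaton guesses the permutation $\sigma_{g@v}\in\Sym(X)$ at the current vertex $v=\xi_1\cdots\xi_n$, checks that it sends $\xi_{n+1}$ to $\eta_{n+1}$ (this is exactly the condition that $g$ maps the ray correctly at this coordinate, since $\eta_{n+1}=\sigma_{g@v}(\xi_{n+1})$), advances the $\mathcal M$-component along the corresponding transition of $\delta$, and — if we are in the nucleus phase — updates $h$ to $h@\xi_{n+1}\in\mathcal N$, which is forced by $h$ and $\xi_{n+1}$. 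The Müller acceptance condition requires that the run eventually stays inside the nucleus phase and that the residual set of $\mathcal M$-states is accepting; by König's lemma this is exactly the condition that the guessed portrait is the portrait of a genuine element $g\in G$. Thus the accepted $\omega$-words over $X\times X$ are exactly the pairs $(\xi,g(\xi))$ for $g\in G$, i.e.\ $\mathcal O$.

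The main obstacle is the subtlety already visible in the proof of Theorem~\ref{thm1}: guessing a portrait coordinate-by-coordinate along a single ray is \emph{not} enough to guarantee that the guesses assemble into an element of $G$, because an element of $\Aut\T$ is determined by its portrait on the whole tree, not just one ray. The fix is that we are not required to produce a consistent $g$ on all of $\T$: any ray-portrait that is a ``restriction'' of some element of $G$ suffices, and Lemma~\ref{preliminarylemma} together with the nucleus structure guarantees that a ray-portrait whose tail lies in $\mathcal N$ and which is locally compatible with the branch automaton $\mathcal M$ (i.e.\ every partial guess extends, via $\phi$ and the recurrence of $G$, to a full portrait of some group element) does extend — this is where recurrence and level-transitivity, built into the hypotheses of Theorem~\ref{thm1}, are used. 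Once the automaton is built, $\omega$-regularity of $\mathcal O$ is immediate, and computability follows because all the data ($\mathcal N$, the branch structure, the automaton $\mathcal M$) are algorithmically obtainable from the defining data of $G$, exactly as in the corollary on the word problem; hence the automaton for $\mathcal O$ can be written down effectively.
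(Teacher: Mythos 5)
Your construction is essentially the paper's: the paper takes the Müller tree automaton $\mathcal M$ of Theorem~\ref{thm2} (whose stateset is already $Q\sqcup\mathcal N$, with residual sets contained in $2^{\mathcal N}$) and projects it along the input ray to a classical Müller $\omega$-automaton over $X\times X$, with transitions $(q,(y,z),q_y)$ whenever some tree transition $(q,a,(q_x)_x)\in\delta$ has $a(y)=z$. Your bookkeeping of ``an $\mathcal M$-state, plus the current nucleus element once in the nucleus phase'' is exactly the stateset of that automaton, and your acceptance condition (eventually only nucleus states, residual set accepting) is its Müller condition. So the automaton itself is fine and the completeness direction (every pair $(\xi,g\xi)$ is accepted) is immediate, as in the paper.

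The one delicate point is soundness --- that an accepting run along a single ray really witnesses some $g\in G$ with $g\xi=\eta$ --- and here your justification goes astray. You appeal to recurrence and level-transitivity ``built into the hypotheses of Theorem~\ref{thm1}'', but the corollary only assumes $G$ contracting and regularly branched; those hypotheses are neither available nor needed, and Lemma~\ref{preliminarylemma} is already absorbed into the proof of Theorem~\ref{thm2}, so invoking it again for the ray adds nothing. The correct argument is purely automata-theoretic: each transition of the $\omega$-automaton is backed by a tree transition $(q,a,(q_x)_x)$ of $\mathcal M$, and every state of the Theorem~\ref{thm2} automaton admits an accepting continuation on a full subtree (a nucleus state $h$ continues via the transitions $(h,\sigma_h,(h@x)_x)$ entirely inside $\mathcal N$, and a state in $Q$ is $\pi(g)$ for some $g\in G$, whose own run continues it); grafting such continuations onto all off-ray children turns the ray-run into an accepting run of $\mathcal M$ on the whole tree. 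By Theorem~\ref{thm2} its value is the portrait of some $g\in G$, and since each guessed permutation sends $\xi_{n+1}$ to $\eta_{n+1}$ one gets $g\xi=\eta$. With that replacement your proof is correct and matches the paper's (which leaves this completion step implicit); the computability claim is fine as stated.
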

\begin{proof}
Let $\mathcal M$ be a Müller tree automaton recognizing the portraits of $G$. Define a (classical) Müller automaton $\mathcal M'$ with same stateset $Q$, same initial and residual states $Q_f,Q_i$, alphabet $A'=X\times X$, and transitions $\delta'\subset Q\times A'\times Q$, by
\[\delta'=\{(q,(y,z),q_y):\exists (q,a,(q_x)_{x\in X})\in\delta\text{ with }a(y)=z\}.\]
Then every run of $\mathcal M$, recognizing an element $g\in G$ leads to runs, for all $\xi\in X^\omega$, of $\mathcal M'$ recognizing $(\xi,g\xi)$; thus the language of $\mathcal M'$ is $\mathcal O$.

In particular, for every preperiodic ray $\xi$, the orbit of $\xi$ is computable as an $\omega$-regular language $\subseteq X^\omega$, by intersecting $\mathcal O$ with the $\omega$-regular language $\{\xi\}\times X^\omega$ and projecting to the second coordinate.
\end{proof}

\bibliographystyle{plain}
\bibliography{bib}

\end{document}